\newtheorem{theorem}{Theorem}[section]
\newtheorem*{mil}{Maximal Intersection Lemma}
\newtheorem{lemma}[theorem]{Lemma}
\newtheorem{corollary}[theorem]{Corollary}
\newtheorem{proposition}[theorem]{Proposition}
\newtheorem{remark}[theorem]{Remark}
\newtheorem{definition}[theorem]{Definition}
\theoremstyle{definition}
\newtheorem{example}{Example}[section]
\begin{document}
\title[The Bootstrap and von Neumann algebras : The Maximal Intersection Lemma]{The Bootstrap and von Neumann algebras : \\ The Maximal Intersection Lemma}
\author{Kenley Jung}
\begin{abstract} Given a suitably nested family $Z = \langle Z(m,k,\gamma) \rangle_{m,k \in \mathbb N, \gamma >0}$ of Borel subsets of matrices, and associated Borel measures and rate function, $\mu$, an entropy, $\chi^{\mu}(Z)$, is introduced which generalizes the microstates free entropy in free probability theory.  Under weak regularity conditions there exists a finite tuple of operators $X$ in a tracial von Neumann algebra such that
\begin{eqnarray*}
\chi^{\mu}(X) & \geq & \chi^{\mu}(X \cap Z) \\
                     & = & \chi^{\mu}(Z)\\ 
\end{eqnarray*}  
where $X \cap Z = \langle \Gamma(X;m,k,\gamma) \cap Z(m,k,\gamma) \rangle_{m, k \in \mathbb N, \gamma >0}$.  This observation can be used to establish the existence of finite tuples of operators with finite $\chi^{\mu}$-entropy.  The intuition and proof come from the bootstrap in statistical inference.   
\end{abstract}
\maketitle
\section{Introduction}

The main result of this paper is about von Neumann algebras and free probability.  Its proof and statement can be made entirely in terms of the language of those subjects.  However, the intuition behind these results comes from methods in statistical inference.  To provide motivation I will first survey the relevant statistical concepts and then relate them to operator algebras.

\subsection{Statistical Inference and the Bootstrap}

Suppose $x_1,\ldots, x_n$ is a sample of numbers.  They could be heights in a population, yes/no votes for a political proposition, or daily profits from a trading strategy.  From this limited sample, imagine you have to infer something about the entire "population" from which this sample was drawn (assume the sample size $n$ is significantly smaller than the total "population").  For instance, let's say we're interested in understanding the population mean.   We could average $\overline{x}$ of $x_1,\ldots, x_n$ and offer this as a best guess for the entire population mean.  But how accurately does $\overline{x}$ approximate the overall population mean?  If one got another random sample of numbers from the same source and computed the average couldn't it be different from the first average?  If so, how different?  Statistical inference provides a way of clarifying and quantifying this difference.

Suppose $X_1,\ldots, X_n$ are independent, identically distributed random variables and that $x_i = X_i(\omega)$ for $1 \leq i \leq n$.  Define $A=(X_1 + \cdots +X_n)/n$.   Notice that $\overline{x} = A(\omega)$.  Denote by $\mu$ and $\sigma$ the common mean and standard deviation of the $X_i$, respectively and by $N$ the standard normal distribution.  By the Central Limit Theorem
\begin{eqnarray*}
\frac{(A-\mu)}{\frac{\sigma}{\sqrt{n}}} \rightarrow N.
\end{eqnarray*}
If $P$ denotes the underlying probability measure, then $P(N \in (-2,2)) \sim 0.95$.  Thus, from the above,
\begin{eqnarray*}
0.95 & \sim & P\left (-2 < \frac{A - \mu}{\frac{\sigma}{\sqrt{n}}}  < 2 \right) \\ 
        & = & P\left (-\frac{2\sigma}{\sqrt{n}} < A - \mu < \frac{2\sigma}{\sqrt{n}} \right) \\
        & = & P\left(A -\frac{2\sigma}{\sqrt{n}} < \mu < A + \frac{2\sigma}{\sqrt{n}} \right).\\
\end{eqnarray*} 
If we're trying to capture the mean $\mu$ of the random variables, then the above says that, with $95\%$ probability, $\mu$ lies in the random interval $(A - 2\sigma/\sqrt{n}, A+ 2\sigma/\sqrt{n})$.  Evaluating $A$ at $\omega$ (and putting in an appropriate value for $\sigma$) yields a $95\%$-confidence interval.  It is the evaluation of a random process which will contain the (never seen) mean of the $X_i$ with $95\%$ probability. 

The above situation and the derivation of the confidence intervals is standard.  The quantity that one ends up trying to estimate, the mean $\mu$, is the most basic of parameters to estimate.  One could however imagine settings where one wants to estimate more complex parameters of the underlying random variables, parameters for which there is no clean analytic solution as above.  For example, the $x_i$ might be vectors so that the $X_i$ become random i.i.d. vectors and one might want to understand what fraction of the variance of the $x_i$ is "explained" by a proper subspace of fixed dimension (principal component analysis).  The math needed to arrive at an analytic solution for confidence intervals on the subspace variance or some other natural statistics become vastly harder, if not outright impossible.  But there is a method known as the \textbf{bootstrap} which replaces these difficult mathematical derivations with random simulation to arrive at an alternative solution.

In the example above consider the sample probability space $\Omega = \{x_1,\ldots, x_n\}$ equipped with the probability $P$ defined by $P((a,b)) = \#\{i: x_i \in (a,b)\}/n$.  Suppose the quantity of interest can be written as some function $s$ of a set of sample points.  In the first example above $s$ was the average of the sample points, but $s$ could be much more complicated.  A bootstrap sample $b_j$ consists of a random sample of size  $n$ drawn from $\Omega$ with replacement.  One can apply $s$ to this sample yielding $s(b_j)$; this is called a bootstrap replication.  Repeat this some fixed number of times, say $K=1000$, and consider the set of bootstrap replications $s(b_1),\ldots, s(b_{1000})$ associated to the bootstrap samples $b_1, \ldots, b_{1000}$.  The $95\%$ bootstrap confidence interval is the interval whose endpoints are the $5$ and $95$ percentile of these bootstrap replications.  I want to emphasize here that in place of mathematical formulas which bridge the data points ($\Omega$) to random variables, the bootstrap procedure uses only the data points, $\Omega$, and resamples to create its randomness.  It bypasses by simulation the (often difficult) math derivations as well as the parametric assumptions often needed to facilitate such computations.  This point is key in making an analogy between the bootstrap and a construction of von Neumann algebras with Borel subsets and measures.

The bootstrap was invented by Efron in the late 70's (see the introductory text \cite{e}), building on Quenouille's jackknife (\cite{q}).  Advances in computers and statistical software have made the bootstrap a widespread, practical tool in statistics and machine learning (e.g., bagging and Breiman's Random Forest, see \cite{ml} for a more detailed discussion).

\subsection{Free Probability and Microstates}


Voiculescu developed \textbf{free probability} as a kind of noncommutative probability theory modeled after a class of tracial von Neumann algebras called the free group factors.  In this probability theory random variables are replaced by operators in a tracial von Neumann algebra $M$, the expectation is replaced by $\varphi$, and independence is replaced by free independence.  Free probability has numerous parallels with classical probability and the fundamental result in \cite{v0} connects them by expressing free independence in terms of the asymptotics of independent random matrices.  

One parallel free probability has with classical probability is an entropy theory, a version of which is called the 'microstates free entropy'.  This incarnation is inspired by Boltzmann's entropy formula, $S = k \log W$, in statistical mechanics.  Here $S$ represents the entropy of a macrostate, $k$ is a constant, and $W$ is the "wahrscheinlichkeit" or probability obtained by counting the number of microstates which correspond to the macrostate.  In the free probability context the macrostate is replaced with a finite tuple $X$ of operators in the tracial von Neumann algebra $M$ and the microstates are replaced by matricial microstates which I'll define presently.  Given $m,k \in \mathbb C$ and $\gamma >0$, the $(m,k,\gamma)$ $*$-microstates $\Gamma(X;m,k,\gamma)$ consists of all elements $\xi = (\xi_1,\ldots, \xi_n) \in (M_k(\mathbb C))^n$ such that for any $1 \leq p \leq m$, $1 \leq i_1,\ldots, i_p \leq n$, and $j_1,\ldots, j_p \in \{1, *\}$,
\begin{eqnarray*}
|\varphi(x_{i_1}^{j_1} \cdots x_{i_p}^{j_p}) - tr_k(\xi_{i_1}^{j_1} \cdots \xi_{i_p}^{j_p})| < \gamma.
\end{eqnarray*}
$\xi$ is called a (matricial) microstate for the tuple $X$.  In the same way that one counts microstates to arrive at the entropy of the macrostate, one can deploy a layered limiting process on the matricial microstate spaces to obtain several entropy-like numerical measurements of $X$ (one of which is called the free entropy of $X$ and is denoted by $\chi(X)$).   Operator algebra applications of these entropy quantities hinge on the idea that the microstates as a totality reflect fundamental features of not only the tuple $X$, but also the von Neumann algebra it generates.   See \cite{v4} for further details.

As mentioned before, free probability regards operators as random variables.  Thus, the "macrostate" tuple $X$ is a tuple of 'random variables' and the matricial microstates can be regarded as sample points generated by this tuple.  The situation resembles the statistical inference setting discussed above.  There the idea was to extract from a sample of data points something about the mean of the random variables which generated them.  There were two ways to do this.  In the first, mathematical manipulations on the random variable level (approximation, Central Limit Theorem) allowed for a computation of a confidence interval.  The second used resampling of the data points, i.e. the bootstrap.  What does the bootstrap say in the free entropy setting when $X$ replaces the random variables and the matricial microstates are the sample points drawn from $X$?

Since the bootstrap mechanism operates entirely in the realm of the sample space with no parametric assumption on the random variables, the bootstrap analogue should be able to take subsets consisting of tuples of matrices and extract information from the tuple $X$ which "generated" them.  What conditions should be placed on the subsets and what "counting" function can be used (recall that one "counts" in the Boltzmann entropy, and ignoring details, one does the same with microstates entropy) to guarantee such an $X$ even exists?  

Only mild regularity conditions are needed to guarantee that such an $X$ exists.  If $Z = \langle Z(m,k,\gamma) \rangle_{m,k \in \mathbb N, \gamma >0}$ consists of suitably nested Borel subsets of $n$-tuples of $k\times k$ complex matrices which are uniformly bounded in operator norm over $m,k$ and $\gamma$, and $\mu$ consists of a family $\langle \mu(m,k,\gamma) \rangle_{m,k \in \mathbb N, \gamma >0}$ of Borel measures on the $Z(m,k,\gamma)$ and a rate function (which serves to measure the asymptotic decay in the "counting"), one can define a free entropy $\chi^{\mu}(Z) \in [-\infty, \infty]$ relative to $\mu$.  For a normalization of Lebesgue measure and a rate function of the form $k^{-2}$ this free entropy becomes the usual free entropy introduced in \cite{v1}.  It turns out that there exists a finite tuple $X$ in a tracial von Neumann algebra such that the intersection of its microstate spaces with $Z$ is equal to $\chi^{\mu}(Z)$.  In particular, the entropy of $X$ relative to $\mu$, $\chi^{\mu}(X)$, is greater than or equal to $\chi^{\mu}(Z)$.  This result is called the \textbf{maximal intersection lemma}.  It replaces the resampling methodology in the bootstrap with the ultraproduct construction of the hyperfinite $\mathrm{II}_1$-factor (Axiom of Choice).  The necessary terminology and proof of this lemma will be covered in the following section.
  
The maximal intersection lemma is an existence theorem which allows one to work with arbitrary Borel subsets of Euclidean space to create tuples of operators which have as a lower bound the entropy of the subsets.  The advantage here is that it can be easier to compute lower bound numerics (statistics) for families of subsets with some coarse condition (e.g. balls) than of those for the microstate spaces of some specific finite tuple of operators. 

The maximal intersection lemma can be used to produce tuples of operators which have finite free entropy quantities in several known situations.  For example, if $Z$ is the set of all selfadjoint matrices with operator norm no more than $1$, then the maximal intersection lemma produces a tuple $X$ with $\chi(X) \geq \chi(Z)$; invoking results of \cite{v3}, it turns out that $X$ is necessarily the free semicircular family on $n$ elements.  A similar result holds by taking Cartesian products of sets and invoking results of \cite{bd}.  Another application involves a concentration of measure type result on complements of freely independent families.  

\section{The Maximal Intersection Lemma}

\subsection{Scopes, Zones, and the lemma}

In order to state the maximal intersection lemma, it will be necessary to generalize some microstate notions and their associated free entropy quantities.  

For any $k \in \mathbb N$ $M_k(\mathbb C)$ denotes the $k \times k$ complex matrices and $tr_k$ is the normalized trace on $M_k(\mathbb C)$.   For an $n$-tuple $\xi = (\xi_1,\ldots, \xi_n)$ of elements in $M_k(\mathbb C)$, $\|\xi\|_{\infty}$ denotes the maximum of the operator norms of the $\xi_i$.  For any $n$, $R>0$ $(M_k(\mathbb C))^n$ denotes the space of $n$-tuples of elements in $M_k(\mathbb C)$ and $((M_k(\mathbb C))_R)^n$ denotes the subset of $(M_k(\mathbb C))^n$ of elements with $\|\xi\|_{\infty} \leq R$.  $(M_k(\mathbb C))^n$ is a finite dimensional real vector space and therefore has a unique topology generated by any of its norms.

\begin{definition} Fix $n \in \mathbb N$.  A zone (on $n$ variables) is a collection of sets $Z = \langle Z(m,k,\gamma) \rangle_{m,k \in \mathbb N, \gamma >0}$  such that the following conditions hold:
\begin{enumerate}
\item For any $m,k \in \mathbb N$ and $\gamma >0$, $Z(m,k,\gamma) \subset (M_k(\mathbb C))^n$ is a Borel subset.
\item For any $m_1 \geq m$ and $0<\gamma_1 \leq \gamma$, $Z(m_1,k,\gamma_1) \subset Z(m,k,\gamma)$.
\item For any fixed $*$-monomial $w$ in $n$ variables $\sup_{\xi \in Z(m,k,\gamma)} |tr_k(w(\xi))| < \infty$.  
\end{enumerate}
A zone $Z$ is \textit{bounded} by $R >0$ if for any $m,k$, $\gamma$, $Z(m,k,\gamma) \subset ((M_k(\mathbb C))_R)^n$ and $Z$ is said to be \textit{nonempty} if for any $m \in \mathbb N$ and $\gamma >0$ there exist  infinitely many $k$ such that $Z(m,k,\gamma) \neq \emptyset$.
\end{definition}

\begin{remark} If $Z = \langle Z(m,k,\gamma) \rangle_{m,k \in \mathbb N, \gamma >0}$ and $W = \langle W(m,k,\gamma) \rangle_{m,k \in \mathbb N, \gamma >0}$ are (bounded) zones, then the intersection 
\begin{eqnarray*}
W \cap Z = \langle W(m,k,\gamma) \cap Z(m,k,\gamma) \rangle_{m,k \in \mathbb N, \gamma >0},
\end{eqnarray*}  
union 
\begin{eqnarray*}
W \cup Z = \langle W(m,k,\gamma) \cup Z(m,k,\gamma) \rangle_{m,k \in \mathbb N, \gamma >0},
\end{eqnarray*}
and Cartesian product 
\begin{eqnarray*} (W,Z) = \langle W(m,k,\gamma) \times Z(m,k,\gamma) \rangle_{m,k \in \mathbb N, \gamma >0}
\end{eqnarray*}
are (bounded) zones.  The only property that requires some argument in this statement is condition (3) of Definition 2.1 for the union and Cartesian product of non-bounded zones.  This follows readily from induction on the length of $w$ and the Cauchy-Schwarz inequality.
\end{remark}

\begin{example} If $X$ is finite tuple of elements in a tracial von Neumann algebra, then $\Gamma(X)$ denotes the zone $\langle \Gamma(X;m,k,\gamma) \rangle_{m,k \in \mathbb N, \gamma >0}$ discussed in the introduction.  Similarly, if $R>0$, then $\Gamma_R(X)$ denotes the bounded zone $\langle \Gamma_R(X;m,k,\gamma) \rangle_{m,k \in \mathbb N, \gamma >0}$ where $\Gamma_R(X;m,k,\gamma) = \Gamma(X;m,k,\gamma) \cap ((M_k(\mathbb C))_R)^n$.  Given a zone $Z$, $X \cap Z = \Gamma(X) \cap Z$ and $X_R \cap Z = \Gamma_R(X) \cap Z$.
\end{example}

\begin{definition} A zone $W$ is a subzone of another zone $Z$ if for any $m \in \mathbb N$, $\gamma >0$ and sufficiently large $k$ dependent on $m$ and $\gamma$, $W(m,k,\gamma) \subset Z(m,k,\gamma)$.  This condition is denoted by $W \subset Z$.
\end{definition}

\begin{definition} Suppose for each $m, k \in \mathbb N$ and $\gamma >0$, $\mu_{m,k,\gamma}$ is a Borel measure on $(M_k(\mathbb C))^n$ which is finite on bounded subsets.  Assume $r=\langle r_k \rangle_{k=1}^{\infty}$ is a sequence of positive real numbers converging to $0$.  Write $\mu = (\langle \mu_{m,k,\gamma} \rangle_{m,k \in \mathbb N, \gamma >0},r)$.  $\mu$ is called a \textsl{scale} (on $n$ variables).  A pairing, $(Z, \mu)$, consisting of a zone $Z$ and scale $\mu$ on n variables is called a scope (on $n$ variables).
\end{definition}

\begin{definition}
Given a scope $(Z,\mu)$ on $n$ variables with $\mu = (\langle \mu_{m,k,\gamma} \rangle_{m,k \in \mathbb N, \gamma >0},r)$  as above, define successively

\[ \chi^{\mu} (Z(m,\gamma)) = \limsup_{k \rightarrow \infty} \left( r_k \cdot \log[\mu_{m,k,\gamma}(Z(m,k,\gamma))] \right),
\] 

\[ \chi^{\mu}(Z) = \inf \{\chi^{\mu}(Z(m,\gamma)): m \in \mathbb N, \gamma >0\}.  \]
$\chi^{\mu}(Z)$ is the free entropy of $Z$ relative to $\mu$.
\end{definition}

Suppose  $X$ is an $n$-tuple of selfadjoint elements in a tracial von Neumann algebra and  $Z = \Gamma(X)$ (Example 2.1).   Set $\mu_{m,k,\gamma} = c_k \cdot \text{vol}^{sa}$ where $\text{vol}^{sa}$ is Lebesgue measure w.r.t. the Euclidean norm $\|\cdot\|_2$ given by $\|\xi\|_2= \sum_{i=1}^n tr_k(\xi_i^*\xi_i)^{1/2}$, restricted to the subspace of $n$-tuples of selfadjoints and $c_k = k^{nk^2}$.   Set $r_k = k^{-2}$ and $r = \langle r_k \rangle_{k=1}^{\infty}$.  If $\mu = (\langle \mu_{m,k,\gamma} \rangle_{m,k \in \mathbb N, \gamma >0},r)$, then
\begin{eqnarray*}\chi^{\mu}(Z) = \chi(X)
\end{eqnarray*}
where $\chi$ denotes selfadjoint entropy introduced in \cite{v1}.  When $\mu$ consists of this normalization of Lebesgue measure and the standard rate function $r_k = k^{-2}$, then I'll write $\chi(Z)$ for $\chi^{\mu}(Z)$.  

\begin{remark} If $(W,\mu)$ is a subscope of $(Z,\mu)$, then $\chi^{\mu}(W) \leq \chi^{\mu}(Z)$.
\end{remark}
 
\begin{proposition} If $Z_1, \ldots, Z_n$ are zones and $\mu$ is a scaling, then 
\begin{eqnarray*}
\chi^{\mu}(\cup_{1 \leq j \leq n} Z_j) & = & \max_{1 \leq j \leq n} \chi^{\mu}(Z_j).\\
\end{eqnarray*}
\end{proposition}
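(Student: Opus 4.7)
The plan is to chain together two simple interchanges: first $\limsup_k$ with $\max_j$ at the level of a fixed $(m,\gamma)$, then $\inf_{m,\gamma}$ with $\max_j$ using monotonicity of the zones. Both commutations are valid only because the index $j$ ranges over the finite set $\{1,\dots,n\}$, and this finiteness is the only nontrivial input.

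First I would fix $(m,k,\gamma)$ and use the elementary sandwich
\[
\max_{1\leq j\leq n} \mu_{m,k,\gamma}(Z_j(m,k,\gamma))
\;\leq\;
\mu_{m,k,\gamma}\!\Big(\bigcup_{j} Z_j(m,k,\gamma)\Big)
\;\leq\;
n\cdot \max_{1\leq j\leq n}\mu_{m,k,\gamma}(Z_j(m,k,\gamma)).
\]
Taking logarithms, multiplying by $r_k$, and observing that $r_k\log n \to 0$ (since $r_k\to 0$ by Definition 2.4), passing to $\limsup_{k\to\infty}$ reduces the problem to the identity
\[
\limsup_{k\to\infty}\; r_k\!\cdot\!\max_{j}\log\mu_{m,k,\gamma}(Z_j(m,k,\gamma))
\;=\;
\max_{j}\;\limsup_{k\to\infty}\;r_k\log\mu_{m,k,\gamma}(Z_j(m,k,\gamma)).
\]
This holds because $\max$ over a finite index set commutes with $\limsup$ (the $\geq$ direction uses a subsequence realizing the max on the right; the $\leq$ direction uses $\max_j K_j$ of the thresholds coming from the finitely many $\epsilon$-bounds on each sequence). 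The conclusion of this first step is
\[
\chi^{\mu}\!\Big(\bigcup_{j} Z_j\Big)(m,\gamma)
\;=\;
\max_{1\leq j\leq n}\chi^{\mu}(Z_j(m,\gamma)).
\]

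Now I would take the infimum over $(m,\gamma)$. By the nesting condition (2) in the definition of a zone, each quantity $\chi^{\mu}(Z_j(m,\gamma))$ is monotone nonincreasing as $m$ increases and $\gamma$ decreases, and so is its maximum over the finite index $j$. Hence both $\chi^{\mu}(Z_j) = \inf_{m,\gamma}\chi^{\mu}(Z_j(m,\gamma))$ and $\chi^{\mu}(\cup_j Z_j)=\inf_{m,\gamma}\max_j\chi^{\mu}(Z_j(m,\gamma))$ are realized as monotone limits along any cofinal sequence $(m_\ell,\gamma_\ell)$ with $m_\ell\nearrow\infty$ and $\gamma_\ell\searrow 0$. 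Since the maximum of finitely many convergent monotone sequences converges to the maximum of the individual limits, we obtain
\[
\chi^{\mu}\!\Big(\bigcup_{j} Z_j\Big)
= \lim_{\ell}\max_{j}\chi^{\mu}(Z_j(m_\ell,\gamma_\ell))
= \max_{j}\lim_{\ell}\chi^{\mu}(Z_j(m_\ell,\gamma_\ell))
= \max_{1\leq j\leq n}\chi^{\mu}(Z_j),
\]
completing the argument.

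The only subtle step is the second interchange, where a priori $\inf\max \geq \max\inf$ is easy but the reverse inequality fails for infinite index sets. Here the hypothesis that only finitely many zones $Z_1,\dots,Z_n$ are being united, combined with the monotonicity built into Definition 2.1(2), is exactly what makes the interchange legal. Conditions (1) and (3) of Definition 2.1 are not used directly, beyond ensuring that the $\mu$-measures in question are well defined.
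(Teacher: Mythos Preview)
Your proof is correct and follows essentially the same route as the paper: both use the sandwich $\max_j \mu(\cdot)\le \mu(\cup)\le n\max_j\mu(\cdot)$ together with $r_k\log n\to 0$ to handle the $\limsup_k$, and both then interchange $\inf_{m,\gamma}$ with $\max_j$ using finiteness of $j$ and the monotonicity built into Definition~2.1(2). The only cosmetic difference is that the paper does the second interchange via a pigeonhole argument (some fixed $j_1$ realizes the maximum for infinitely many $m$, then pass to the limit along those $m$), whereas you pass to a cofinal monotone sequence and use that the limit of a $\max$ of finitely many convergent sequences is the $\max$ of the limits; these are two phrasings of the same idea.
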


\begin{proof}  $\mu = (\langle \mu_{m,k,\gamma} \rangle_{m,k \in \mathbb N, \gamma >0},r)$ for some Borel measures $\mu_{m,k,\gamma}$ and positive sequence $r_k$ converging to $0$.  For any $m, k \in \mathbb N$ and $\gamma >0$, subadditivity of measures yields,

\begin{eqnarray*} \mu_{m,k,\gamma}( \cup_{1 \leq j \leq n} Z_j(m,k,\gamma)) & \leq & \sum_{1 \leq j \leq n} \mu_{m,k,\gamma}(Z_j(m,k,\gamma)) \\                                                                  & \leq & n \cdot \max_{1 \leq j \leq n} \mu_{m,k,\gamma}(Z_j(m,k,\gamma)).\\
\end{eqnarray*}
Hence,

\begin{eqnarray*} \chi^{\mu}( \cup_{1 \leq j \leq n} Z_j;m,\gamma) & = &  \limsup_{k\rightarrow \infty} r_k \cdot \log \left [ \mu_{m,k,\gamma}  (\cup_{1 \leq j \leq k} Z_j(m,k,\gamma))) \right] \\ & \leq &  \limsup_{k\rightarrow \infty} r_k \log n + r_k \cdot \log \left( \max_{1 \leq j \leq n} \mu_{m,k,\gamma}(Z_j(m,k,\gamma)) \right) \\ & = & \limsup_{k \rightarrow \infty} r_k \cdot \log \left( \max_{1 \leq j \leq n} \mu_{m,k,\gamma}(Z_j(m,k,\gamma))\right) \\ & = & \max_{1 \leq j \leq n} \chi^{\mu}(Z_j;m,\gamma).\\
\end{eqnarray*}

\noindent Thus, for every $m$, $\chi^{\mu} (\cup_{1 \leq j \leq k} Z_j;m,m^{-1}) \leq \max_{1 \leq j \leq n} \chi^{\mu}(Z_j;m, m^{-1})$. For some fixed $1 \leq j_1 \leq n$ and infinitely many $m$,  $\chi^{\mu} (\cup_{1 \leq j \leq k} Z_j;m,m^{-1}) \leq  \chi^{\mu}(Z_{j_1};m, m^{-1})$.  Taking a limit and using Remark 2.6 yields

\begin{eqnarray*} \chi^{\mu}( \cup_{1 \leq j \leq k} Z_j) & \leq & \chi^{\mu}(Z_{j_1}) \\
                                                          & \leq & \max_{1 \leq j \leq n} \chi^{\mu}(Z_j) \\
                                                          & \leq &  \chi^{\mu}( \cup_{1 \leq j \leq k} Z_j).\\
\end{eqnarray*}
\end{proof}
 
Denote by $\mathbb W_n$ the collection of all $*$-monomials in $n$ indeterminates.  $\mathbb W_n$ is clearly countable.   For a fixed $w \in \mathbb W_n$ and $E \subset \mathbb C$ denote by $M_{w, E}(k,\gamma)$ the set of all $\xi=(\xi_1,\ldots, \xi_n) \in (M_k(\mathbb C))^n$ such that $tr_k(w(\xi))$ is in the $\gamma$-neighborhood of $E$.  If $Z = \langle Z(m,k,\gamma) \rangle_{m,k \in \mathbb N, \gamma >0}$ is a matricial scope then clearly 
\[
Z \cap M_{w, E} = \langle Z(m,k,\gamma) \cap M_{w, E}(k,\gamma) \rangle_{m,k \in \mathbb N, \gamma >0}\]

\noindent is a matricial scope.  When $E = \{\lambda\}$ for some $\lambda \in \mathbb C$ the quantities $M_{w,\{\lambda\}}(\ldots)$ will be written as $M_{w,\lambda}(\ldots)$.  In this case  $M_{w,\lambda}(k,\gamma)$ consists of all tuples whose $w$-moment is within $\gamma$ of $\lambda$.

\begin{lemma} Suppose $(Z, \mu)$ is a scope on $n$ variables.  If $w$ is a fixed $*$-monomial on $n$-variables, then for any $\epsilon >0$ there exists a complex number $\lambda$ bounded by a constant dependent only on $w$ such that 
$\chi^{\mu}(Z \cap M_{w,B(\lambda, \epsilon)}) = \chi^{\mu}(Z)$ 
\end{lemma}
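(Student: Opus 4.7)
The plan is to reduce $Z$ to a finite union of zones, each constraining the $w$-moment to a small ball, and then to extract the maximal component via Proposition 2.7.

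The starting observation is that condition (3) of Definition 2.1 supplies a finite constant $C_w$ such that $|tr_k(w(\xi))| \leq C_w$ for every $m, k \in \mathbb N$, $\gamma > 0$ and every $\xi \in Z(m,k,\gamma)$; in particular every $w$-moment realised on $Z$ lies in the compact disk $\overline{B(0, C_w)} \subset \mathbb C$. Cover this disk by finitely many open balls $B(\lambda_1, \epsilon), \ldots, B(\lambda_N, \epsilon)$ whose centres satisfy $|\lambda_j| \leq C_w$.

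For any $\xi \in Z(m,k,\gamma)$ the scalar $tr_k(w(\xi))$ lies in some $B(\lambda_j, \epsilon)$, which is contained in the $\gamma$-neighborhood of $B(\lambda_j, \epsilon)$; hence $\xi \in M_{w, B(\lambda_j, \epsilon)}(k, \gamma)$. Consequently
\[
Z = \bigcup_{j=1}^{N} Z \cap M_{w, B(\lambda_j, \epsilon)},
\]
and each piece $Z \cap M_{w, B(\lambda_j, \epsilon)}$ is a zone by the discussion preceding the lemma. Applying Proposition 2.7 to this finite decomposition yields
\[
\chi^{\mu}(Z) = \max_{1 \leq j \leq N} \chi^{\mu}(Z \cap M_{w, B(\lambda_j, \epsilon)}),
\]
so choosing $\lambda = \lambda_{j_0}$ for an index $j_0$ attaining the maximum gives $|\lambda| \leq C_w$ together with the required equality of entropies.

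There is no serious obstacle here: the only points to verify are that condition (3) is being read as a bound uniform over all $(m,k,\gamma)$ (as the notation of Definition 2.1 indicates), and that the cover of $\overline{B(0, C_w)}$ can be realised with centres inside that same disk, which is an immediate compactness exercise. Everything else is bookkeeping and the invocation of Proposition 2.7, which does the real work.
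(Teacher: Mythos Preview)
Your argument is correct and follows essentially the same route as the paper's own proof: use condition~(3) of Definition~2.1 to get a uniform bound $C_w$ on the $w$-moments, cover the disk $\overline{B(0,C_w)}$ by finitely many $\epsilon$-balls, observe that $Z$ decomposes as the union of the corresponding $Z \cap M_{w,B(\lambda_j,\epsilon)}$, and invoke Proposition~2.7. Your added remark that the centres $\lambda_j$ can be taken inside the disk (so that the bound on $\lambda$ is indeed $C_w$) is a small clarification the paper leaves implicit.
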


\begin{proof} By definition there exists a constant $C$ dependent on $w$ such that for any $\xi \in Z(m,k,\gamma)$, $|tr_k(w(\xi))| < C$.  Find a cover of the closed ball of radius $C$ in $\mathbb C$ by open $\epsilon$-balls $B(\lambda_1, \epsilon), \ldots, B(\lambda_n,\epsilon)$.  For any $k$,

\[ Z(m,k,\gamma) \subset \cup_{i=1}^n M(w, B(\lambda_i, \epsilon), k, \gamma).
\] 

\noindent Hence,

\begin{eqnarray*} Z = \cup_{1 \leq j \leq n}  \left[ Z \cap M_{w,B(\lambda_j, \epsilon)} \right ].
\end{eqnarray*}

\noindent By Proposition 2.7 there exists some $1 \leq j\leq n$ such that $\chi^{\mu}(Z) = \chi^{\mu}(Z \cap M_{w, B(\lambda_j, \epsilon)})$. 
\end{proof}

\begin{lemma} If $(Z,\mu)$ is a matricial scope over $n$ variables, and $w \in \mathbb W_n$, then there exists a complex number $\lambda$ such that $\chi^{\mu}(Z \cap M_{w,\lambda}) =\chi^{\mu}(Z)$.
\end{lemma}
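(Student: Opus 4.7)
The plan is to upgrade Lemma 2.9 from small $\epsilon$-balls to a single point via a compactness argument in $\mathbb{C}$. For each $j \in \mathbb{N}$, I would apply Lemma 2.9 with $\epsilon = 1/j$ to produce a complex number $\lambda_j$, whose modulus is bounded by a constant depending only on $w$, such that $\chi^{\mu}(Z \cap M_{w, B(\lambda_j, 1/j)}) = \chi^{\mu}(Z)$. Since the $\lambda_j$ lie in a compact disk, Bolzano--Weierstrass extracts a convergent subsequence; after relabeling, assume $\lambda_j \to \lambda$ for some $\lambda \in \mathbb{C}$. This $\lambda$ will be the desired complex number.

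The inequality $\chi^{\mu}(Z \cap M_{w, \lambda}) \leq \chi^{\mu}(Z)$ is immediate from Remark 2.6, since $Z \cap M_{w, \lambda}$ is a subzone of $Z$. For the reverse direction I would first establish an intermediate step: $\chi^{\mu}(Z \cap M_{w, B(\lambda, \delta)}) = \chi^{\mu}(Z)$ for every $\delta > 0$. Once $j$ is large enough that $|\lambda_j - \lambda| + 1/j \leq \delta$, the inclusion $B(\lambda_j, 1/j) \subset B(\lambda, \delta)$ yields the zone containment $Z \cap M_{w, B(\lambda_j, 1/j)} \subset Z \cap M_{w, B(\lambda, \delta)}$. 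Combining this with $\chi^{\mu}(Z \cap M_{w, B(\lambda_j, 1/j)}) = \chi^{\mu}(Z)$ and the trivial reverse bound from Remark 2.6 sandwiches $\chi^{\mu}(Z \cap M_{w, B(\lambda, \delta)})$ to $\chi^{\mu}(Z)$.

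To pass from $B(\lambda, \delta)$ to the singleton $\{\lambda\}$, I would exploit the elementary identity $M_{w, B(\lambda, \delta)}(k, \gamma - \delta) = M_{w, \lambda}(k, \gamma)$ for $0 < \delta < \gamma$, since both sides equal $\{\xi : |tr_k(w(\xi)) - \lambda| < \gamma\}$. Fixing $(m, \gamma)$ and $\delta \in (0, \gamma)$, this identity together with $Z(m, k, \gamma - \delta) \subset Z(m, k, \gamma)$ from Definition 2.1 yields
\[ Z(m, k, \gamma - \delta) \cap M_{w, B(\lambda, \delta)}(k, \gamma - \delta) \;\subset\; Z(m, k, \gamma) \cap M_{w, \lambda}(k, \gamma). \]
Taking measures of both sides, applying $\limsup_k r_k \log(\cdot)$, and using $\chi^{\mu}(Z \cap M_{w, B(\lambda, \delta)})(m, \gamma - \delta) \geq \chi^{\mu}(Z)$ transfers the lower bound to $\chi^{\mu}(Z \cap M_{w, \lambda})(m, \gamma)$; taking the infimum over $(m, \gamma)$ completes the reverse inequality.

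The main obstacle is the measure comparison in the final step: the left-hand side of the inclusion above is measured against $\mu_{m, k, \gamma - \delta}$ while the right-hand side is measured against $\mu_{m, k, \gamma}$. In the motivating Lebesgue example these measures do not depend on $(m, \gamma)$ and the comparison is immediate; more generally one must either invoke a natural monotonicity of the family $\{\mu_{m, k, \gamma}\}$ in $\gamma$ or reformulate the inclusion so that the $\gamma$-parameters match. This is the step I would expect to require the most care.
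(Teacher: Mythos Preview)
Your approach mirrors the paper's: apply Lemma~2.8 with $\epsilon=1/j$, extract a convergent subsequence $\lambda_j\to\lambda_0$ by compactness, and transfer the entropy equality via a set inclusion. The paper dispenses with your intermediate $B(\lambda,\delta)$ stage and passes directly to the singleton: for fixed $(m,\gamma)$ and $j$ large enough that $j^{-1},\,|\lambda_j-\lambda_0|<\gamma$, one has $M_{w,B(\lambda_j,j^{-1})}(k,\gamma)\subset M_{w,\lambda_0}(k,3\gamma)$, hence
\[
\chi^{\mu}(Z)=\chi^{\mu}\bigl(Z\cap M_{w,B(\lambda_j,j^{-1})}\bigr)\leq \chi^{\mu}\bigl((Z\cap M_{w,\lambda_0})(m,3\gamma)\bigr),
\]
and taking the infimum over $(m,\gamma)$ finishes. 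Your two-stage version is a slightly finer decomposition of the same idea; the first stage (balls at the same $\gamma$) is genuinely clean, and all the difficulty is pushed into the second.

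The obstacle you flag---the mismatch between $\mu_{m,k,\gamma-\delta}$ and $\mu_{m,k,\gamma}$---is exactly the issue the paper's direct argument faces when comparing the $(m,\gamma)$ and $(m,3\gamma)$ levels; the paper simply does not comment on it. Both arguments are unproblematic when the measures $\mu_{m,k,\gamma}$ do not depend on $(m,\gamma)$ (as in every concrete scale used in the paper) or are monotone in $\gamma$; in the stated generality of Definition~2.4 some such hypothesis is implicitly needed. So your caution is well placed, but it does not mark a point where your argument diverges from the paper's---you have identified a subtlety the paper glosses over rather than a defect peculiar to your route.
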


\begin{proof} By Lemma 2.8 there exists a constant $C$ dependent on $w$ such that for each $j \in \mathbb N$ there exists a $\lambda_j \in \mathbb C$, $|\lambda_j| \leq C$, satisfying $\chi^{\mu}(Z \cap M_{w,B(\lambda_j, j^{-1})}))) = \chi^{\mu}(Z)$.  Passing to a convergent subsequence assume without loss of generality that $\langle \lambda_j \rangle_{j=1}^{\infty}$ converges to some $\lambda_0 \in \mathbb C$.   Suppose $m \in \mathbb N$ and $\gamma >0$.  For $j$ sufficiently large, $j^{-1}, |\lambda_j - \lambda_0| < \gamma \Rightarrow M_{w, B(\lambda_j, j^{-1})}(k,\gamma) \subset M_{w,\lambda_0}(k, 3 \gamma)$.

\[  Z(m,k,\gamma) \cap M_{w, B(\lambda_j, j^{-1})}(k,\gamma) \subset Z(m,k,\gamma) \cap M_{w, \lambda_0}(k,3\gamma)
\]

\noindent Hence for $j$ sufficiently large, 

\begin{eqnarray*} \chi^{\mu}(Z) & = & \chi^{\mu}(Z \cap M_{w,B(\lambda_j, j^{-1})}) \\ & \leq & \chi^{\mu}((Z\cap M_{w, B(\lambda_j, j^{-1})})(m, \gamma)) \\ & \leq & \chi^{\mu}(Z \cap M_{w,\lambda_0})(m, 3\gamma)) \\
\end{eqnarray*}

\noindent Since $m$ and $\gamma$ were arbitrary, it follows that $\chi^{\mu}(Z) \leq \chi^{\mu}(Z \cap M_{w,\lambda_0})$.  The reverse inequality for $\chi^{\mu}$ is obvious. 
\end{proof}

\begin{lemma} If $(Z,\mu)$ is a scope over $n$-variables, then there exists a function $f: \mathbb W_n \rightarrow \mathbb C$ such that for any $k \in \mathbb N$,
\begin{eqnarray*}
\chi^{\mu}(Z \cap (\cap_{j=1}^k M_{w_j, f(w_j)})) & = & \chi^{\mu}(Z).
\end{eqnarray*}
\end{lemma}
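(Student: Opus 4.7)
The plan is to iterate Lemma 2.9 along a fixed enumeration of the countable set $\mathbb W_n$. Enumerate $\mathbb W_n = \{w_1, w_2, \ldots\}$. I would construct the values $f(w_j) = \lambda_j \in \mathbb C$ one at a time, maintaining as an inductive invariant the equality of $\chi^{\mu}$ for the running scope obtained by intersecting $Z$ with the moment-constraints seen so far.

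More precisely, set $Z_0 = Z$. Assume inductively that $Z_{j-1} := Z \cap \bigcap_{i=1}^{j-1} M_{w_i, \lambda_i}$ has been shown to be a zone (in the sense of Definition 2.1) and that $\chi^{\mu}(Z_{j-1}) = \chi^{\mu}(Z)$. Apply Lemma 2.9 to the scope $(Z_{j-1}, \mu)$ and the monomial $w_j$ to obtain $\lambda_j \in \mathbb C$ with
\begin{equation*}
\chi^{\mu}(Z_{j-1} \cap M_{w_j, \lambda_j}) \;=\; \chi^{\mu}(Z_{j-1}) \;=\; \chi^{\mu}(Z).
\end{equation*}
Define $Z_j = Z_{j-1} \cap M_{w_j, \lambda_j}$ and $f(w_j) = \lambda_j$. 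Then for any $k \in \mathbb N$, the conclusion $\chi^{\mu}(Z \cap \bigcap_{j=1}^{k} M_{w_j, f(w_j)}) = \chi^{\mu}(Z_k) = \chi^{\mu}(Z)$ follows immediately from the invariant.

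The only thing to check at each step, in order to apply Lemma 2.9, is that $Z_{j-1}$ really is a zone. Borel measurability is preserved under countable intersection, so condition (1) of Definition 2.1 holds. The nesting condition (2) is preserved because $M_{w,\lambda}(k,\gamma_1) \subset M_{w,\lambda}(k,\gamma)$ whenever $\gamma_1 \leq \gamma$ (and there is no $m$-dependence to worry about on the $M_{w,\lambda}$ factors). Condition (3) on uniform boundedness of traces of $\ast$-monomials is inherited from $Z$ since $Z_{j-1}(m,k,\gamma) \subset Z(m,k,\gamma)$.

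The single real step in the argument is therefore the inductive application of Lemma 2.9; everything else is bookkeeping. I do not expect any serious obstacle, since no diagonal argument over all of $\mathbb W_n$ simultaneously is needed: the conclusion is quantified over finite $k$, so a countable sequence of single-monomial applications suffices. The only thing one has to be slightly careful about is not to overshoot and try to intersect across all infinitely many $w_j$ simultaneously, because the entropies $\chi^{\mu}(Z_j)$ need not be realized by any single sequence of $(m,\gamma)$ pairs, and passing to the infinite intersection could in principle strictly decrease the entropy.
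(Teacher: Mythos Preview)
Your proposal is correct and follows essentially the same argument as the paper: enumerate $\mathbb W_n$, set $Z_0=Z$, and iteratively apply Lemma~2.9 to the scope $(Z_{j-1},\mu)$ with the monomial $w_j$ to produce $\lambda_j$ and $Z_j=Z_{j-1}\cap M_{w_j,\lambda_j}$, then define $f(w_j)=\lambda_j$. Your additional verification that each $Z_{j-1}$ is a zone and your caveat about not passing to the infinite intersection are helpful elaborations, but the core argument is identical to the paper's (which also notes the implicit use of Dependent Choice in a remark).
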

\begin{proof}  Find an enumeration $\langle w_j \rangle_{j=1}^{\infty}$.  Apply Lemma 2.9 with $w = w_1$ to obtain a complex number $\lambda_1$ such that $\chi^{\mu}(Z \cap M_{w_1, \lambda_1}) = \chi^{\mu}(Z)$ and set $Z_1 = Z \cap M_{w_1,\lambda_1}$.  Apply Lemma 2.9 again with $w = w_2$ to the matricial scope $(Z_1,\mu)$ to obtain a complex number $\lambda_2$ such that $\chi^{\mu}(Z_1 \cap M_{w_2, \lambda_2}) = \chi^{\mu}(Z_1)$ and set $Z_2 = Z_1 \cap M_{w_2, \lambda_2}$.  Continue the process to arrive at a nested sequence of subzones $(Z_k, \mu)$ of $(Z,\mu)$ and a sequence $\langle \lambda_j \rangle_{j=1}^{\infty}$ such that for any $j$,
\begin{eqnarray*}
\chi^{\mu}(Z_j) & = & \chi^{\mu}(Z \cap (\cap_{i=1}^j M_{w_i, \lambda_i})) \\
		       & = & \chi^{\mu}(Z). \\
\end{eqnarray*}
Define $f: \mathbb W_n \rightarrow \mathbb C$ by $f(w_j) = \lambda_j$.  
\end{proof}

\begin{remark} Technically speaking in the proof above, one is using the Axiom of Dependent Choice to construct the $Z_k$ and $\lambda_k$.
\end{remark}

Fix a free ultrafilter $\mathcal F$ of $\mathbb N$ and denote by $\mathcal R$ a copy of the hyperfinite $\mathrm{II}_1$-factor.  Consider the associated ultraproduct of $\mathcal R$ by $\mathcal F$, $(\mathcal R^{\mathcal F}, \varphi^{\mathcal F})$.  Recall that $\mathcal R^{\mathcal F}$ is obtained by quotienting the von Neumann algebra of all uniformly bounded sequences $\ell^{\infty}(\mathcal R)$ by the ideal $J = \{ \langle x_i \rangle_{i=1}^{\infty} \in \ell^{\infty}(\mathcal R) : \lim_{i \in \mathcal F} \|x_i\|_2 =0\}$.  $\mathcal R^{\mathcal F}$ has a canonical trace $\varphi^{\mathcal F}$ given by $\varphi^{\mathcal F}(Q(x)) = \lim_{i \in \mathcal F} \varphi(x_i)$, $x = \langle x_i \rangle_{i=1}^{\infty}$ where $Q: \ell^{\mathcal R} \rightarrow \mathcal R^{\mathcal F}$ is the quotient map.  This definition is independent of the choice of representative $x$.

Because $\mathcal F$ is free, if $\langle c_j \rangle_{j=1}^{\infty}$ is a sequence in $\mathbb C$ which converges to $c$, then $\lim_{i \in \mathcal F} c_i = c$.
\begin{mil} If $(Z,\mu)$ is a bounded scope on $n$ variables, then there exists a tracial von Neumann algebra with a finite $n$-tuple of generators $X$, such that 
\begin{eqnarray*}
\chi^{\mu}(X) & \geq & \chi^{\mu}(X \cap Z) \\
                      & = & \chi^{\mu}(Z).
\end{eqnarray*} 
The norm of the operators in $X$ can be arranged to have a bound no greater than that of $Z$.
\end{mil}

\begin{proof}  The conclusion is vacuous if $\chi^{\mu}(Z) = -\infty$ so it suffices to prove this under the additional assumption that $\chi^{\mu}(Z) > -\infty$.  Invoke Lemma 2.10 to produce a function $f: \mathbb W_n \rightarrow \mathbb C$ such that for any $m \in \mathbb N$,
\begin{eqnarray*}
\chi^{\mu}(Z \cap (\cap_{j=1}^m M_{w_j, f(w_j)})) & = & \chi^{\mu}(Z) \\
                                                                             & > & -\infty.
\end{eqnarray*}
For any $m \in \mathbb N$, $1 \leq j \leq m$, and infinitely many $k$,

\[Z(m, k, m^{-1}) \cap \left ( \cap_{j=1}^m M_{w_j, f(w_j)}(k, m^{-1}) \right)  \neq \emptyset.
\]

\noindent It follows that for each $m$ there exists a $k_m \in \mathbb N$ and an $n$-tuple $\xi_m \in Z(m, k_m, m^{-1}) \cap ( \cap_{j=1}^m M_{w_j, f(w_j)}(k_m, m^{-1}))$ of $k_m \times k_m$ matrices such that for all $1 \leq j \leq m$, 
\begin{eqnarray*}
|tr_k(w_j(\xi_m))-f(w_j)| & < & m^{-1}.
\end{eqnarray*}  

For each $m$ fix a trace preserving $*$-embedding $\pi_m:M_{k_m}(\mathbb C) \rightarrow \mathcal R$.  For each $1 \leq i \leq n$, denote by $\xi_{i,m}$ the ith coordinate of $\xi_m$ so that $\xi_m = (\xi_{1,m}, \ldots, \xi_{n,m})$.  Define $y_i = \langle \pi_m(\xi_{i,m}) \rangle_{m=1}^{\infty}$.  Since $Z$ is a bounded zone, $y_i \in \oplus_{k=1}^{\infty} \mathcal R$ and the norms of the $y_i$ are bounded by the bound on $Z$.  Notice that by construction, for any $j \in \mathbb N$
\begin{eqnarray*}
\lim_{m \rightarrow \infty} \varphi\left[w_j(\pi_m(\xi_{1,m}), \ldots, \pi_m(\xi_{n,m}))\right] & = & \lim_{m \rightarrow \infty} \varphi(\pi_m(w_j(\xi_{1,m}, \ldots, \xi_{n,m}))) \\
& = & \lim_{m \rightarrow \infty} tr_{k_m}(w_j(\xi)) \\
                                                                                                                               & = & f(w_j). \\
\end{eqnarray*}

Recall the free ultrafilter $\mathcal F$ of $\mathbb N$ and the associated ultraproduct of $\mathcal R$ by $\mathcal F$, $(\mathcal R^{\mathcal F}, \varphi^{\mathcal F})$.  Denote by $x_i$ the image of $y_i$ in $(\mathcal R^{\mathcal F}, \varphi^{\mathcal F})$.  From freeness of the ultrafilter $\mathcal F$ and the above computation it follows that for any $j \in \mathbb N$,
\begin{eqnarray*} 
\varphi^{\mathcal F}(w_j(x_1, \ldots, x_n)) & = & \lim_{m \in \mathcal F} \varphi \left [w_j\left(\pi_m(\xi_{1, m}), \ldots, \pi_m(\xi_{n, m}) \right) \right]\\
                                                               & = & f(w_j).\\
\end{eqnarray*}

\noindent Set $X = (x_1, \ldots, x_n)$.  

$X$ is an $n$-tuple of elements in the $\mathrm{II}_1$-factor $(R^{\lambda}, \varphi^{\lambda})$ whose $w$-moment for any $w \in \mathbb W_n$ is $f(w)$.  Moreover, for any fixed $m \in \mathbb N$ there exists $p \in \mathbb N$ large enough so that for all $k$,

\begin{eqnarray*} \Gamma(X;m,k,m^{-1}) & \supset & Z(p,k,p^{-1}) \cap \Gamma(X;m,k,m^{-1}) \\ & \supset & Z(p, k, p^{-1}) \cap \left ( \cap_{j=1}^p M_{w_j, f(w_j)}(k, p^{-1}) \right).\\
\end{eqnarray*}

\noindent  Applying $r_k \cdot \log \mu(m,k,\gamma)$, followed by a $\limsup_{k \rightarrow \infty}$ yields

\begin{eqnarray*} \chi^{\mu}(X;m,m^{-1})  & \geq & \chi^{\mu}(Z \cap X;m,m^{-1}) \\
                                           & \geq & \chi^{\mu}(Z \cap (\cap_{j=1}^p M_{w_j, f(w_j)});p, p^{-1})\\
                                           & \geq & \chi^{\mu}(Z \cap (\cap_{j=1}^p M_{w_j, f(w_j)})) \\
                                           & = & \chi^{\mu}(Z).
\end{eqnarray*}
This being true for any $m$ it follows that 
\begin{eqnarray*}
\chi^{\mu}(X) & \geq & \chi^{\mu}(X \cap Z) \\
                     & \geq & \chi^{\mu}(Z).\\
\end{eqnarray*}  
$\chi^{\mu}(Z) \geq \chi^{\mu}(X \cap Z)$ so by the above $\chi^{\mu}(X \cap Z) = \chi^{\mu}(Z)$, completing the proof. 
\end{proof}

\subsection{Examples}

As mentioned in the introduction, the Maximal Intersection Lemma can be used to manufacture tuples of operators from Euclidean subsets in such a way that the von Neumann algebra generated from the tuples inherit some geometric-measure-theoretic properties of the Euclidean subsets.  I'll discuss here a few simple examples of this.  The first is an existential way of getting a nontrivial free entropy example without resorting to the multivariate random matrix techniques in \cite{v0}.

\begin{example} (\textbf{nontriviality of $\chi$ for $n$-tuples, $n >1$}) Fix $n \in \mathbb N$ and for each $m,k \in \mathbb N$, $\gamma >0$ define $Z^n(m,k,\gamma)$ to be the Cartesian product of the ball of operator norm radius $1$ in $M^{sa}_k(\mathbb C)$ $n$ times.  $Z^n = \langle Z^n(m,k,\gamma) \rangle_{m,k\in \mathbb N, \gamma >0}$ is a bounded zone.  Recall that a $(0,1)$ semicircular element is a contractive, selfadjoint element $s$ in a tracial von Neumann algebra whose $k$th moment under the trace is $\frac{2}{\pi}\int_{-1}^1 t^k \sqrt{1-t^2}\, dt$.  Voiculescu's formula for the free entropy of a single selfadjoint element in \cite{v1} shows that $\chi(s) = \frac{1}{2} \log (2\pi e)$ and clearly $\chi(Z^1) \geq \chi(s)$.  Alternatively, one can use the change of variables in \cite{m} or \cite{sr} to show that $\chi(Z^1) \geq  \frac{1}{2} \log (2\pi e)$.  In any case,
\begin{eqnarray*} \chi(Z^n) & = &   \limsup_{k \rightarrow \infty} \left[ k^{-2} \cdot  \log(\text{vol}(Z^n(m,k,\gamma))) + n \log k \right] \\
                          & \geq &   n \lim_{k \rightarrow \infty} \left[ k^{-2} \cdot \log(\text{vol}(\Gamma_1(s;m,k,\gamma)+ n \log k \right] \\
& = & \frac{n}{2} \cdot \log(2\pi  e) \\ & > & -\infty. \\
\end{eqnarray*} 
Invoke the Maximal Intersection Lemma to produce an $n$-tuple of selfadjoint elements $X$ in a tracial von Neumann algebra such that $\chi(X) \geq \chi(X\cap Z^n) = \chi(Z^n) = \frac{n}{2} \cdot \log(2\pi  e) > -\infty$.  

   Using the maximal entropy result in \cite{v5}, this $n$-tuple must in fact be a free $n$-semicircular family, and thus, in the context of Cartesian products, the limiting process in the Maximal Intersection Lemma which produces the tuple $X$ invariably leads to a free $n$-semicircular family.  
\end{example}

A stronger statement about how the semicircular family saturates the scope of selfadjoint contractions is possible. Applying the argument with slightly more care than in the situation above yields:

\begin{corollary} Denote by $S_n$ an $n$-tuple of freely independent, semicircular elements.  Fix $m_0 \in \mathbb N$ and $\gamma_0 >0$. There exists a $c <1$ such that for sufficiently large $k$, 

\begin{eqnarray*} \frac{\text{vol}\left [\Gamma_1(S_n;m_0,k,\gamma_0)^c \cap ((M^{sa}_k(\mathbb C))_1)^n\right]}{\text{vol}(\Gamma_1(S_n;m_0,k,\gamma_0))} \leq c^{k^2} 
\end{eqnarray*}
\end{corollary}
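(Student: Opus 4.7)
The plan is to apply the Maximal Intersection Lemma to the complement of the semicircular microstate set and combine the result with Voiculescu's maximal-entropy characterization (\cite{v5}) of the free $n$-semicircular family. Define a bounded self-adjoint zone $W$ by
$W(m,k,\gamma) := ((M^{sa}_k(\mathbb C))_1)^n \setminus \Gamma_1(S_n;m_0,k,\gamma_0)$
for all $m,k,\gamma$, so that $W$ is independent of $m$ and $\gamma$; it is trivially nested and is easily seen to be a bounded zone in the sense of Definition 2.1. Equip $W$ with the standard self-adjoint Lebesgue scale $\mu$ from the discussion following Definition 2.5, so $\chi^\mu = \chi$. The Maximal Intersection Lemma then produces an $n$-tuple $X$ of self-adjoint contractions in a tracial von Neumann algebra satisfying $\chi(X) \geq \chi(X \cap W) = \chi(W)$.

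The heart of the argument is the strict inequality $\chi(W) < \chi(S_n)$. Voiculescu's bound from \cite{v5} gives $\chi(X) \leq \chi(S_n) = \frac{n}{2}\log(2\pi e)$, hence $\chi(W) \leq \chi(S_n)$. Suppose equality held. Then $\chi(X) = \chi(S_n)$, and the rigidity (uniqueness) half of Voiculescu's theorem forces $X$ to have the joint $*$-distribution of a free $n$-semicircular family. Consequently, for every $m \geq m_0$, $\gamma \leq \gamma_0$, and $k$, one has $\Gamma_1(X;m,k,\gamma) \subset \Gamma_1(S_n;m_0,k,\gamma_0)$, so $(X \cap W)(m,k,\gamma) = \emptyset$ for all $k$, making $\chi(X\cap W) = -\infty$. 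This contradicts $\chi(X \cap W) = \chi(W) = \chi(S_n) > -\infty$. This rigidity step is the main obstacle of the proof: without the uniqueness assertion in Voiculescu's maximal entropy theorem, the Maximal Intersection Lemma yields only $\chi(W) \leq \chi(S_n)$, which is insufficient to produce any exponential gap.

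To convert the strict gap into the stated ratio bound, set $\epsilon := \chi(S_n) - \chi(W) > 0$ and $V_k := \Gamma_1(S_n;m_0,k,\gamma_0)$; the case $\chi(W) = -\infty$ is immediate, so assume $\chi(W)$ is finite. Because $W$ does not depend on $(m,\gamma)$, $\chi(W) = \limsup_{k \to \infty} k^{-2}\log[k^{nk^2}\text{vol}(W_k)]$, so for all sufficiently large $k$ this quantity is less than $\chi(W) + \epsilon/3$. On the denominator side, Gaussian concentration for self-adjoint random matrices (cf.~\cite{v0}, \cite{v1}, or the Ben Arous--Guionnet large deviation principle) supplies the matching lower bound $\liminf_k k^{-2}\log[k^{nk^2}\text{vol}(V_k)] \geq \chi(S_n)$, so eventually $k^{-2}\log[k^{nk^2}\text{vol}(V_k)] > \chi(S_n) - \epsilon/3$. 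Subtracting the two inequalities cancels the normalization factor $k^{nk^2}$ and yields $k^{-2}\log[\text{vol}(W_k)/\text{vol}(V_k)] < -\epsilon/3$ for all sufficiently large $k$; taking $c := e^{-\epsilon/3} < 1$ concludes the proof.
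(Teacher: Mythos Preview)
Your proof is correct and follows essentially the same route as the paper: define the complement zone, apply the Maximal Intersection Lemma, derive a contradiction from Voiculescu's maximal-entropy rigidity \cite{v5} to obtain the strict inequality $\chi(W)<\chi(S_n)$, and then convert the gap into an exponential ratio bound using the regularity (equality of $\limsup$ and $\liminf$) of $S_n$. The only cosmetic differences are that the paper phrases the upper bound $\chi(X)\le \tfrac{n}{2}\log(2\pi e)$ via the Euclidean ball volume/Stirling rather than citing \cite{v5}, and it sets $c=e^{a}$ with $a=\chi(Z)-\chi(S_n)$ directly instead of introducing an $\epsilon/3$ buffer; your explicit treatment of the case $\chi(W)=-\infty$ is in fact slightly cleaner.
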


\begin{proof} For any $m,k\in \mathbb N$ and $\gamma >0$ define 
\begin{eqnarray*} Z(m,k,\gamma) = ((M^{sa}_k(\mathbb C))_1)^n \cap \Gamma_1(S_n;m_0,k,\gamma_0)^c.
\end{eqnarray*} 
  $Z = \langle Z(m,k,\gamma) \rangle_{m,k \in \mathbb N, \gamma >0}$ is a bounded zone on $n$ variables.  I claim that $\chi(Z) < \frac{n}{2} \cdot \log(2\pi  e)$.  Indeed, suppose to the contrary that this is not the case.  Then $\chi(Z) \geq \frac{n}{2} \cdot \log(2\pi  e)$ and by the volume formula of the unit ball in Euclidean space and Stirling's formula, it follows that $\chi(Z) = \frac{n}{2} \cdot \log(2\pi  e)$.  Invoke the Maximal Intersection Lemma to produce an $n$-tuple of contractions $X$ such that 
\begin{eqnarray*}
\chi(X) & \geq & \chi(X \cap Z) \\
            & \geq & \chi(Z) \\
            & = & \frac{n}{2} \cdot \log(2\pi  e).\\
\end{eqnarray*}  
Again by the volume formula for the unit ball in Euclidean space and Stirling's formula (or now using the general bound for free entropy \cite{v1}) and the fact that the elements of $X$ are contractions, $\chi(X) \leq  \frac{n}{2} \cdot \log(2\pi  e)$ so that $\chi(X) = \frac{n}{2} \cdot \log(2\pi  e)$.  By the maximal entropy result in \cite{v5}, $X$ is a free family of $n$-semicircular elements.   $\chi(X \cap Z) > -\infty$ so in particular, for sufficiently large $k$, 
\begin{eqnarray*}
\emptyset & \neq & \Gamma_1(X;m_0,k,\gamma_0) \cap Z(m_0,k,\gamma_0) \\
                 & = & \Gamma_1(S_n;m_0,k,\gamma_0) \cap Z(m_0,k,\gamma_0).  \\
\end{eqnarray*}
This is preposterous in light of the definition of $Z$.   $\chi(Z) < \frac{n}{2} \cdot \log(2\pi  e)$.

Set $a = \chi(Z) -  \frac{n}{2} \cdot \log(2\pi  e)$.  By the above, $a <0$.  Using the regularity of $S_n$ (in the sense that replacing a $\limsup_{k \rightarrow \infty}$ with a $\liminf_{k \rightarrow \infty}$ in the definition of $\chi(S_n)$ yields the same quantities, see \cite{v3}),  

\begin{eqnarray*} 0 & > & a \\
                    & = & \chi(Z) - \chi(S_n) \\
                    & \geq & \left (\limsup_{k \rightarrow \infty} k^{-2} \cdot \log(\text{vol}(Z(m_0,k,\gamma_0)) + n \log k \right)- \\
                    &   &  \left (\liminf_{k \rightarrow \infty} k^{-2} \cdot \log(\text{vol}(\Gamma_1(S_n;m_0,k,\gamma_0) + n \log k\right) \\
                    & \geq & \limsup_{k \rightarrow \infty} k^{-2} \cdot \log \left [ \frac{\text{vol}(Z(m_0,k,\gamma_0))}{\text{vol}(\Gamma_1(S_n:m_0,k,\gamma_0))} \right ] \\
                    & \geq & \limsup_{k \rightarrow \infty} k^{-2} \cdot \log \left[ \frac{\text{vol}\left [\Gamma_1(S_n;m_0,k,\gamma_0)^c \cap ((M^{sa}_k(\mathbb C))_1)^n\right]}{\text{vol}(\Gamma_1(S_n:m_0,k,\gamma_0))} \right ]
\end{eqnarray*}

Hence, for sufficiently large $k$,

\begin{eqnarray*} \frac{\text{vol}\left [\Gamma_1(S_n;m_0,k,\gamma_0)^c \cap ((M^{sa}_k(\mathbb C))_1)^n\right]}{\text{vol}(\Gamma_1(S_n:m_0,k,\gamma_0))} & \leq & e^{ak^2}.
\end{eqnarray*}

\noindent Set $c = e^a < 1$.
\end{proof}

\begin{remark} The exponential decay in the corollary should remind one of a normal Levy family in the context of concentration. Alternatively, one can proof the corollary above by using such techniques as in \cite{v3} (essentially by accounting for the Gaussian dimensional decay).
\end{remark}

\begin{example} (\textbf{Nontriviality of $\chi$ in Cartesian Products}) This is proceeds much like the semicircular case.  Fix $n \in \mathbb N$ and suppose $X_1, \ldots, X_n$ are tuples of selfadjoint elements in a tracial von Neumann algebra with finite free entropy.  Assume moreover that each $X_i$ is regular w.r.t. free entropy, i.e., using a $\limsup_{k\rightarrow \infty}$ or $\liminf_{k\rightarrow \infty}$ in the definition of $\chi$ results in the same quantity.  Fix an $R$ greater than the operator norms of any of the elements in the $X_i$.  Define $Z(m,k,\gamma) = \Pi_{i=1}^n \Gamma(X_i;m,k,\gamma)$ and $Z = \langle Z(m,k,\gamma) \rangle_{m,k \in \mathbb N, \gamma >0}$.  $Z$ is a bounded matrical scope and may be regarded as a zone with the usual free entropy.  By the regularity of the $X_i$,

\begin{eqnarray*} \chi(Z) & = & \chi(X_1) + \cdots + \chi(X_n).
\end{eqnarray*} 

\noindent The Maximal Intersection Lemma provides finite tuples of selfadjoints, $Z_1,\ldots, Z_n$ in a common tracial von Neumann algebra $M$ with $\#Z_i = \#X_i$ such that $\chi(Z_1,\ldots, Z_n) \geq \chi((Z_1, \ldots, Z_n) \cap Z) = \chi(Z) = \chi(X_1) + \cdots + \chi(X_n)$.  By the definition of $Z$ the noncommutative moments of $Z_i$ are the same as those of $X_i$ and thus, there exists a $*$-isomorphism $\pi_i$ from the von Neumann algebra generated by $X_i$ onto the von Neumann subalgebra of $M$ generated by $Z_i$.  Thus, 

\begin{eqnarray*}  \chi(X_1) + \cdots + \chi(X_n) & \leq & \chi(Z_1,\ldots, Z_n) \\
                                                  & = & \chi(\pi_1(X_1)),\ldots, \pi_n(X_n)) \\ 
                                                  & \leq & \chi(\pi_1(X_1)) + \cdots + \chi_n(X_n) \\
                                                  & = & \chi(X_1) + \cdots + \chi(X_n).
\end{eqnarray*}
By \cite{bd} the tuples $Z_i$ are in fact free and again, Cartesian products in the limiting process of the Maximal Intersection Lemma necessarily leads to freeness.  
\end{example}

There is here as well a concentration type result on the microstates.  I'll omit the proof and simply state the corollary, which as before has an alternate proof by accounting for the Gaussian decay:

\begin{corollary} Suppose $1 \leq i \leq n$ and $X_i$ is a regular, finite tuple of selfadjoint elements such that $\chi(X_i) > -\infty$.  Set $R_i = \max_{x \in X_i} \|x\|$ and fix $m_0 \in \mathbb N$ and $\gamma_0 >0$.  Define

\begin{eqnarray*} F(m_0,k, \gamma_0) =\{(\xi_1,\ldots, \xi_n) \in \Pi_{i=1}^n \Gamma_{R_i}(X_i;m_0,k,\gamma_0): \{\xi_1,\ldots, \xi_n\} \text{ are } (m_0,\gamma_0)\text{-free}\}.  
\end{eqnarray*}

\noindent There exists a $c<1$ such that for sufficiently large $k$,

\begin{eqnarray*} \frac{\text{vol}\left[F(m_0,k,\gamma_0)^c \cap \Pi_{i=1}^n \Gamma_{R_i}(X_i;m_0,k,\gamma_0)\right]}{\text{vol}\left[\Pi_{i=1}^n \Gamma_{R_i}(X_i;m_0,k,\gamma_0)\right]} < c^{k^2}.
\end{eqnarray*}
\end{corollary}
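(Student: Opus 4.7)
The plan mirrors the strategy of Corollary 2.13, now invoking the Cartesian-product version of the maximal-entropy characterization. First I would assemble a bounded zone on $N = \sum_{i=1}^n \#X_i$ variables by setting
\[
Z(m,k,\gamma) = \left(\Pi_{i=1}^n \Gamma_{R_i}(X_i;m,k,\gamma)\right) \cap F(m_0,k,\gamma_0)^c,
\]
so that $Z = \langle Z(m,k,\gamma) \rangle_{m,k \in \mathbb N, \gamma >0}$ is nested, Borel, and uniformly bounded by $R = \max_i R_i$. The target is the strict inequality $\chi(Z) < \sum_{i=1}^n \chi(X_i)$; once that is established, the same regularity-plus-volume-ratio computation that closes out Corollary 2.13 yields the result, with $\sum_i \chi(X_i)$ playing the role of $\chi(S_n)$ and the $N \log k$ normalization contributions canceling between numerator and denominator.

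For the strict inequality I would argue by contradiction. The opposite inequality $\chi(Z) \leq \sum_i \chi(X_i)$ is guaranteed by Remark 2.6 together with the Cartesian-product calculation in Example 2.3 (which uses regularity of the $X_i$). If equality held, the Maximal Intersection Lemma would yield selfadjoint blocks $(Z_1,\ldots,Z_n)$ in a tracial von Neumann algebra with
\[
\chi(Z_1,\ldots,Z_n) \geq \chi((Z_1,\ldots,Z_n) \cap Z) = \chi(Z) = \sum_{i=1}^n \chi(X_i) > -\infty.
\]
Since $Z \subset \Pi \Gamma_{R_i}(X_i)$, the $*$-moments of each block $Z_i$ match those of $X_i$, forcing $\chi(Z_i) = \chi(X_i)$. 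The blocks would then saturate Voiculescu's subadditivity inequality, and by \cite{bd} (exactly as invoked in Example 2.3) $Z_1,\ldots,Z_n$ must be a freely independent family.

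The step I expect to be the main obstacle is extracting a contradiction from this in-algebra freeness. Because $\chi((Z_1,\ldots,Z_n) \cap Z) > -\infty$, the intersection slices are nonempty at arbitrarily fine tolerance, so I would pick $m \geq m_0$ and $\gamma$ small enough that every $(m,\gamma)$-microstate for the actually free tuple $(Z_1,\ldots,Z_n)$ is automatically $(m_0,\gamma_0)$-free. This is legitimate because $(m_0,\gamma_0)$-freeness is a constraint on a finite list of alternating, centered $*$-moments of length at most $m_0$, and true freeness of the $Z_i$ makes each such moment exactly zero; a sufficiently small perturbation of those moments still lies inside the $\gamma_0$-window. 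A point in the nonempty intersection would therefore simultaneously lie in $F(m_0,k,\gamma_0)$ and in its complement, which is absurd. With the strict inequality in hand, set $a = \chi(Z) - \sum_i \chi(X_i) < 0$ and invoke regularity of each $X_i$ to replace the $\limsup$ with a $\liminf$ in the denominator; the bound
\[
\limsup_{k \to \infty} k^{-2} \log \frac{\text{vol}\left[F(m_0,k,\gamma_0)^c \cap \Pi_i \Gamma_{R_i}(X_i;m_0,k,\gamma_0)\right]}{\text{vol}\left[\Pi_i \Gamma_{R_i}(X_i;m_0,k,\gamma_0)\right]} \leq a
\]
then drops out, and any choice $c \in (e^a, 1)$ gives the claimed exponential decay for sufficiently large $k$.
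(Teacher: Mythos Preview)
Your proposal is correct and follows exactly the route the paper intends: the paper explicitly omits the proof of this corollary, saying only that it ``proceeds much like the semicircular case,'' i.e.\ Corollary~2.12, and your argument is a faithful transcription of that template with the Biane--Dabrowski result from \cite{bd} replacing Voiculescu's maximum-entropy characterization. The one place where you add detail beyond the paper---arguing that for $m$ large and $\gamma$ small every $(m,\gamma)$-microstate of the genuinely free tuple $(Z_1,\ldots,Z_n)$ lands in $F(m_0,k,\gamma_0)$---is the natural substitute for the trivially empty intersection $\Gamma_1(S_n;m_0,k,\gamma_0)\cap\Gamma_1(S_n;m_0,k,\gamma_0)^c$ in the semicircular proof, and it goes through for the reason you give (the $(m_0,\gamma_0)$-freeness constraints are continuous functions of finitely many mixed moments, all of which are controlled at tolerance $(m,\gamma)$ and vanish exactly for the free tuple).
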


\end{document}